\newfont{\footsc}{cmcsc10 at 8truept}
\newfont{\footbf}{cmbx10 at 8truept}
\newfont{\footrm}{cmr10 at 10truept}
\newtheorem{theorem}{Theorem}
\newtheorem{corollary}[theorem]{Corollary}
\newtheorem{problem}[theorem]{Problem}
\newenvironment{proof}[1][Proof]{\noindent{\textbf {#1}  }}  {\hfill$\Box$\bigskip}
\begin{document}

\title{Maximum norms of graphs and matrices, and their complements}
\author{Vladimir Nikiforov \thanks{Department of Mathematical Sciences, University of
Memphis, Memphis TN 38152, USA; email: \textit{vnikifrv@memphis.edu}}
\thanks{Research supported by NSF grant DMS-0906634.} and Xiying Yuan
\thanks{\textit{Corresponding author. }Department of Mathematics, Shanghai
University, Shanghai, 200444, China; email:
\textit{xiyingyuan2007@hotmail.com}} \thanks{Research supported by National
Science Foundation of China grant No. 11101263.} }
\maketitle

\begin{abstract}
Given a graph $G,$ let $\left\Vert G\right\Vert _{\ast}$ denote the trace norm
of its adjacency matrix, also known as the \textit{energy} of $G.$ The main
result of this paper states that if $G$ is a graph of order $n,$ then
\[
\left\Vert G\right\Vert _{\ast}+\left\Vert \overline{G}\right\Vert _{\ast
}\,\leq\left(  n-1\right)  \left(  1+\sqrt{n}\right)  ,
\]
where $\overline{G}$ is the complement of $G.$ Equality is possible if and
only if $G$ is a strongly regular graph with parameters $\left(  n,\left(
n-1\right)  /2,\left(  n-5\right)  /4,\left(  n-1\right)  /4\right)  ,$ known
also as a conference graph.

In fact, the above problem is stated and solved in a more general setup - for
nonnegative matrices with bounded entries. In particular, this study exhibits
analytical matrix functions attaining maxima on matrices with rigid and
complex combinatorial structure.

In the last section the same questions are studied for Ky Fan norms. Possibe
directions for further research are outlined, as it turns out that the above
problems are just a tip of a larger multidimensional research area.\medskip

\textbf{AMS classification: }\textit{15A42, 05C50}

\textbf{Keywords:}\textit{ singular values; nonnegative matrices; trace norm;
graph energy; Ky Fan norms; strongly regular graphs. }

\end{abstract}

\section{Introduction and main results}

In this paper we study the maxima of certain norms of nonnegative matrices
with bounded entries. We shall focus first on the trace norm $\left\Vert
A\right\Vert _{\ast}$ of a matrix $A,$ which is just the sum of its singular
values. The trace norm of the adjacency matrix of graphs has been intensively
studied recently under the name \emph{graph energy}. This research has been
initiated by Gutman in \cite{Gut78}; the reader is referred to \cite{LSG12}
for a comprehensive recent survey and references.\ 

One of the most intriguing problems in this area is to determine which graphs
with given number of vertices have maximal energy. A cornerstone result of
Koolen and Moulton \cite{KoMo01} shows that if $G$ is a graph of order $n,$
then the trace norm of its adjacency matrix $\left\Vert G\right\Vert _{\ast}$
satisfies the inequality%
\begin{equation}
\left\Vert G\right\Vert _{\ast}\leq\left(  1+\sqrt{n}\right)  \frac{n}{2},
\label{maxen}%
\end{equation}
with equality holding precisely when $G\ $is a strongly regular graph with
parameters%
\[
\left(  n,\left(  n+\sqrt{n}\right)  /2,\left(  n+2\sqrt{n}\right)  /4,\left(
n+2\sqrt{n}\right)  /4\right)  .
\]
Note that $G$ can be characterized also as a regular graph of degree $\left(
n+\sqrt{n}\right)  /2,$ whose singular values, other than the largest one, are
equal to $\sqrt{n}/2.$ Graphs with such properties can exist only if $n$ is an
even square. It is easy to see that these graphs are related to Hadamard
matrices, and indeed such relations have been outlined in \cite{Hae08}.

As it turns out, if for a graph $G$ equality holds in (\ref{maxen})$,$ then
its complement $\overline{G}$ is a strongly regular graph which is quite
similar but not isomorphic to $G$, and therefore with $\left\Vert \overline
{G}\right\Vert _{\ast}<\left(  1+\sqrt{n}\right)  n/2$. This observation led
Gutman and Zhou \cite{ZhGu07} to the following natural question:\medskip\ 

\emph{What is the maximum }$\mathcal{E}\left(  n\right)  $\emph{ of the sum
}$\left\Vert G\right\Vert _{\ast}+\left\Vert \overline{G}\right\Vert _{\ast}%
,$\emph{ where }$G$\emph{ is a graph of order} $n?$ \medskip

Questions of this type are\ the subject matter of this paper. To begin with,
note that the Paley graphs, and more generally conference graphs (see below),
provide a lower bound%
\begin{equation}
\mathcal{E}\left(  n\right)  \geq\left(  n-1\right)  \left(  1+\sqrt
{n}\right)  , \label{maxen1}%
\end{equation}
which closely complements the upper bound
\[
\mathcal{E}\left(  n\right)  \leq\sqrt{2}n+\left(  n-1\right)  \sqrt{n-1}%
\]
proved by Gutman and Zhou in \cite{ZhGu07}. Although the latter inequality can
be improved by a more careful proof (see Theorem \ref{th3} below), a gap still
remains between the upper and lower bounds on $\mathcal{E}\left(  n\right)  $.
In this note we shall close this gap and show that inequality (\ref{maxen1})
is in fact an equality. This advancement comes at a price of a rather long
proof, involving some new analytical and combinatorial techniques based on
Weyl's inequalities for sums of Hermitian matrices.

Moreover, we prove our bounds for nonnegative matrices that are more general
than adjacency matrices of graphs, and yet the adjacency matrices of
conference graphs provide the cases of equality. It is somewhat surprising
that purely analytical matrix functions attain their maxima on matrices with a
rigid and sophisticated combinatorial structure.

It is also natural to study similar problems for matrix norms different from
the trace norm, like the Ky Fan or the Schatten norms. Other parameters can be
changed independently and we thus arrive at a whole grid of extremal problems
most of which are open and seem rather difficult. Results of this types and
directions for further research are outlined in Section \ref{FD}.\bigskip

For reader's sake we start with our graph-theoretic result first.

\begin{theorem}
\label{mth}If $G$ is a graph of order $n\geq7,$ then
\begin{equation}
\left\Vert G\right\Vert _{\ast}+\left\Vert \overline{G}\right\Vert _{\ast}%
\leq\left(  n-1\right)  \left(  1+\sqrt{n}\right)  , \label{mres}%
\end{equation}
with equality holding if and only if $G$ is a conference graph.
\end{theorem}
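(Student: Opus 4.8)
The plan is to work throughout with the eigenvalues of the two adjacency matrices and to reduce the trace norm to the sum of the positive eigenvalues. Write $A$ for the adjacency matrix of $G$ and $B=J-I-A$ for that of $\overline{G}$, where $J$ is the all-ones matrix, and let $\mu_1\geq\cdots\geq\mu_n$ and $\nu_1\geq\cdots\geq\nu_n$ be their eigenvalues. Since $A$ and $B$ are symmetric with zero trace, their singular values are the absolute values of their eigenvalues and the positive eigenvalues sum to exactly half of the trace norm; that is,
\[
\left\Vert G\right\Vert_{\ast}=2\sum_{\mu_i>0}\mu_i=:2S^{+}(A),\qquad \left\Vert \overline{G}\right\Vert_{\ast}=2S^{+}(B).
\]
So it suffices to bound $S^{+}(A)+S^{+}(B)$ by $\tfrac{1}{2}(n-1)(1+\sqrt{n})$.

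The engine of the argument is the identity $A+B=J-I$, whose eigenvalues are $n-1$ (once) and $-1$ ($n-1$ times), fed into Weyl's inequalities for sums of Hermitian matrices. First, $\lambda_1(J-I)\le\mu_1+\nu_1$ gives $\mu_1+\nu_1\ge n-1$. Second, and crucially, the inequalities $\mu_i+\nu_j\le\lambda_{i+j-n}(J-I)$ for $i+j\ge n+1$ show that two positive eigenvalues cannot \emph{coexist} too far down the two spectra: if $\mu_p>0$ and $\nu_q>0$ with $p+q\ge n+2$, then $\mu_p+\nu_q\le\lambda_{p+q-n}(J-I)=-1<0$, a contradiction. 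Hence, writing $p$ and $q$ for the numbers of positive eigenvalues of $A$ and $B$, we obtain the structural bound $p+q\le n+1$.

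Next I would estimate $S^{+}(A)$ alone. Split the eigenvalues of $A$ into the Perron root $\mu_1$, the $p-1$ remaining positive eigenvalues, and the negative ones, and apply Cauchy--Schwarz separately to the last two groups. Combined with the trace identities $\sum_i\mu_i=0$ and $\sum_i\mu_i^2=2m$ (with $m$ the number of edges of $G$, and the companion $2\overline{m}=n(n-1)-2m$), this yields a master inequality of the shape
\[
\bigl(S^{+}(A)-\mu_1\bigr)^2\le (p-1)\Bigl(2m-\mu_1^2-\tfrac{S^{+}(A)^2}{n-p}\Bigr),
\]
and likewise for $B$. Summing the two, and then eliminating the free parameters using $2m+2\overline{m}=n(n-1)$, $p+q\le n+1$ and $\mu_1+\nu_1\ge n-1$, reduces everything to maximizing a concrete function of $t:=\mu_1+\nu_1$ over $t\ge n-1$; a derivative computation shows the maximum occurs at the left endpoint $t=n-1$ and equals $\tfrac12(n-1)(1+\sqrt n)$. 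It is important that the negative eigenvalues are kept as a separate Cauchy--Schwarz group linked to $S^{+}$ through the zero-trace relation: the cruder estimate that pools all $2(n-1)$ non-Perron singular values only yields the weaker bound $(n-1)(1+\sqrt{n+1})$, and it is precisely the positivity constraint $p+q\le n+1$ together with the sign structure that sharpens $\sqrt{n+1}$ to $\sqrt{n}$.

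I expect the optimization and, above all, the equality analysis to be the main obstacle. Equality forces all the Cauchy--Schwarz steps to be tight (so the positive non-Perron eigenvalues share one value and the negative eigenvalues another), the Weyl bound to be tight ($p+q=n+1$), and the parameters to take the symmetric values $\mu_1=\nu_1=(n-1)/2$, $p=q=(n+1)/2$, $2m=2\overline{m}=n(n-1)/2$. One must then translate this eigenvalue rigidity into combinatorial rigidity: $G$ is regular of degree $(n-1)/2$ with exactly two further eigenvalues $(-1\pm\sqrt{n})/2$, hence strongly regular with conference parameters. The hypothesis $n\ge7$ serves to make the endpoint analysis valid and to dispose of the small cases; a direct check recovers equality for the conference graphs themselves, for which all the constraints above hold simultaneously with equality.
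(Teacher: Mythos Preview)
Your structural input is right and coincides with the paper's: Weyl's inequalities for $A+B=J-I$ give $\mu_k+\nu_{n-k+2}\le -1$ for $k\ge 2$, whence $p+q\le n+1$. Your master inequality, obtained by applying Cauchy--Schwarz separately to the non-Perron positive eigenvalues and to the negative ones, is also correct and is tight for conference graphs.

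The gap is the optimization. You claim that after summing the two master inequalities and invoking $p+q\le n+1$, $\mu_1+\nu_1\ge n-1$, $2m+2\overline m=n(n-1)$, everything collapses to a one-variable problem in $t=\mu_1+\nu_1$. It does not: you still carry the individual values of $\mu_1,\nu_1,m,p,q$, and the two inequalities have distinct denominators $p-1,\,n-p,\,q-1,\,n-q$ that prevent a clean addition; there is no evident symmetrization that eliminates four of the five free parameters. The paper does not attempt this optimization either. Instead it splits on the value of $p+q$. When $p+q\le n$, a direct AM--QM computation (your ``cruder estimate'', refined by isolating one or two of the Weyl pairs with both entries negative) already yields strict inequality in \eqref{mres}. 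The extremal case $p+q=n+1$ is handled by the shift $A\mapsto A+\tfrac12 I$: writing $C=A+\tfrac12 I$ and $\overline C=\overline A+\tfrac12 I$, one has $C+\overline C=J$, and since each of the $p+q=n+1$ nonnegative eigenvalues of $A$ and $\overline A$ gains $+\tfrac12$ after the shift,
\[
\|C\|_\ast+\|\overline C\|_\ast \;\ge\; \|A\|_\ast+\|\overline A\|_\ast+(p+q)-n \;=\; \|A\|_\ast+\|\overline A\|_\ast+1.
\]
For the pair $C,\overline C$ (diagonal $\tfrac12$, summing to $J$ rather than $J-I$) the straightforward AM--QM bound is exactly $n+(n-1)\sqrt n$ and is tight, so \eqref{mres} follows with the correct constant. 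This half-shift is the idea you are missing; it converts the multi-parameter optimization you anticipate into a one-line reduction, and it is also what drives the equality characterization via the spectrum of $C$.
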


Recall that a conference graph of order $n$ is a strongly regular graph with
parameters%
\[
\left(  n,\left(  n-1\right)  /2,\left(  n-5\right)  /4,\left(  n-1\right)
/4\right)  .
\]
It is known that the eigenvalues of a conference graph of order $n$ are
\[
\left(  n-1\right)  /2,\left(  (\sqrt{n}-1)/2\right)  ^{\left[  \left(
n-1\right)  /2\right]  },\left(  -\left(  \sqrt{n}+1\right)  /2\right)
^{\left[  \left(  n-1\right)  /2\right]  },
\]
where the numbers in brackets denote multiplicities. We shall make use of the
fact that every graph with these eigenvalues must be a conference graph.
Reference material on these questions can be found in \cite{GoRo01}.

The best known type of conference graphs are the Paley graphs $P_{q}:$
\emph{Given a prime power }$q=1$\emph{ }$(\operatorname{mod}$\emph{ }%
$4),$\emph{ the vertices of }$P_{q}$\emph{ are the numbers }$1,\ldots,q$\emph{
and two vertices }$u,v$\emph{ are adjacent if }$u-v$\emph{ is an exact square
}$\operatorname{mod}$\emph{ }$q.$ The Paley graphs are self complementary,
which fits with their extremal property with respect to (\ref{mres}).

As usual, $I_{n}$ stands for the identity matrix of size $n,$ $J_{n}$ stands
for the all ones matrix of size $n,$ and $\mathbf{j}_{n}$ is the
$n$-dimensional vector of all ones. Also, $\sigma_{1}\left(  A\right)
\geq\sigma_{2}\left(  A\right)  \geq\cdots$ will denote the singular values of
a matrix $A$ and $\mu_{1}\left(  A\right)  \geq\mu_{2}\left(  A\right)
\geq\cdots$ will denote the eigenvalues of a symmetric matrix $A$. Finally,
$\left\vert A\right\vert _{\infty}$ will stand for the maximum of the absolute
values of the entries of $A.$\medskip

We shall prove Theorem \ref{mth} in the following matrix setup.

\begin{theorem}
\label{th1}If $A$ is a symmetric nonnegative matrix of size $n\geq7,$ with
$\left\vert A\right\vert _{\infty}\leq1,$ and with zero diagonal, then
\begin{equation}
\left\Vert A\right\Vert _{\ast}+\left\Vert J_{n}-I_{n}-A\right\Vert _{\ast
}\leq\left(  n-1\right)  \left(  1+\sqrt{n}\right)  , \label{main1}%
\end{equation}
with equality holding if and only if $A$ is the adjacency matrix of a
conference graph.
\end{theorem}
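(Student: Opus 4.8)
The plan is to pass to eigenvalues and to pair the spectra of $A$ and $B:=J_{n}-I_{n}-A$ against each other through Weyl's inequalities. Since $A$ and $B$ are symmetric and nonnegative, $\sigma_{i}=\left\vert \mu_{i}\right\vert $ and Perron--Frobenius gives $\sigma_{1}(A)=\mu_{1}(A)=:\alpha$, $\sigma_{1}(B)=\mu_{1}(B)=:\beta$. Because $\mathrm{tr}(A)=\mathrm{tr}(B)=0$, writing $x_{k}=\mu_{k+1}(A)$ and $z_{k}=\mu_{n+1-k}(B)$ for $k=1,\dots,n-1$ we have $\sum_{k}x_{k}=-\alpha$, $\sum_{k}z_{k}=-\beta$, and
\[
\Vert A\Vert_{\ast}+\Vert B\Vert_{\ast}=\alpha+\beta+\sum_{k=1}^{n-1}\bigl(\left\vert x_{k}\right\vert +\left\vert z_{k}\right\vert \bigr).
\]
The crucial input is that $A+B=J_{n}-I_{n}$ has spectrum $n-1,-1,\dots,-1$. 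Applying the Weyl inequality $\mu_{i}(A)+\mu_{j}(B)\leq\mu_{i+j-n}(A+B)$ with $i=k+1$, $j=n+1-k$ gives $x_{k}+z_{k}\leq\mu_{2}(A+B)=-1$ for every $k$, while the top eigenvalues give $\alpha+\beta\geq\mu_{1}(A+B)=n-1$.

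Here the naive route --- bounding $\Vert A\Vert_{\ast}$ and $\Vert B\Vert_{\ast}$ separately by the Koolen--Moulton/Cauchy--Schwarz estimate $\Vert A\Vert_{\ast}\leq\alpha+\sqrt{(n-1)(\mathrm{tr}(A^{2})-\alpha^{2})}$ --- is doomed, since that bound is tight only when the non-principal eigenvalues share a common modulus, whereas a conference graph has the two distinct moduli $(\sqrt{n}\mp 1)/2$. This is the main obstacle, and it forces one to keep $A$ and $B$ coupled. The device I would use is the function $h(t)=\left\vert t\right\vert +\left\vert t+1\right\vert $, which is symmetric about $-\tfrac12$ exactly like the pair $\{\nu_{+},\nu_{-}\}=\{\tfrac{-1+\sqrt{n}}{2},\tfrac{-1-\sqrt{n}}{2}\}$ of non-principal conference eigenvalues. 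Setting $\delta_{k}=-1-x_{k}-z_{k}\geq 0$, two triangle inequalities give $\left\vert x_{k}\right\vert +\left\vert z_{k}\right\vert \leq h(x_{k})+\delta_{k}$ and $\leq h(z_{k})+\delta_{k}$, hence
\[
\left\vert x_{k}\right\vert +\left\vert z_{k}\right\vert \leq\tfrac12\bigl(h(x_{k})+h(z_{k})\bigr)+\delta_{k},
\]
while summing the slacks yields the clean identity $\sum_{k}\delta_{k}=\alpha+\beta-(n-1)$.

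I would then bound $h$ by the parabola $p(t)=\tfrac{2}{\sqrt{n}}\bigl(t+\tfrac12\bigr)^{2}+\tfrac{\sqrt{n}}{2}$; after clearing denominators the inequality $h(t)\leq p(t)$ is just $\bigl(\left\vert t+\tfrac12\right\vert -\tfrac{\sqrt{n}}{2}\bigr)^{2}\geq 0$ (valid for $n\geq 4$), with equality precisely at $t\in\{\nu_{+},\nu_{-}\}$. Since $\sum_{k}x_{k}^{2}=\mathrm{tr}(A^{2})-\alpha^{2}\leq 2m_{A}-\alpha^{2}$ (using $A_{ij}^{2}\leq A_{ij}$), likewise for $B$, and $2m_{A}+2m_{B}=\mathbf{j}_{n}^{\mathsf{T}}(J_{n}-I_{n})\mathbf{j}_{n}=n(n-1)$, summing $\tfrac12(p(x_{k})+p(z_{k}))$ and adding $\alpha+\beta+\sum_{k}\delta_{k}$ collapses the whole bound to a function of $S=\alpha+\beta$ and $\alpha^{2}+\beta^{2}$ alone. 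Using $\alpha^{2}+\beta^{2}\geq S^{2}/2$ together with $S\geq n-1$, a short concavity argument places the maximum at $\alpha=\beta=(n-1)/2$, $S=n-1$, where the expression evaluates to exactly $(n-1)(1+\sqrt{n})$.

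Finally, equality forces $\delta_{k}\equiv 0$ (so $B$ shares eigenvectors with $A$), $\mathrm{tr}(A^{2})=2m_{A}$ and $\mathrm{tr}(B^{2})=2m_{B}$ (so $A$ is a $0/1$ matrix, i.e.\ a graph), $\alpha=\beta=(n-1)/2$, and every $x_{k}\in\{\nu_{+},\nu_{-}\}$. The trace and eigenvalue-sum conditions then pin the multiplicities of $\nu_{\pm}$ to $(n-1)/2$ each, so $A$ has exactly the conference-graph spectrum; by the spectral characterization recalled after Theorem \ref{mth}, $A$ is the adjacency matrix of a conference graph. I expect the genuinely delicate point to be arranging the coupling so that the per-matrix losses and the Weyl slacks cancel exactly; once the pairing $(x_{k},z_{k})$ and the identity $\sum_{k}\delta_{k}=\alpha+\beta-(n-1)$ are in hand, the remaining estimates are routine.
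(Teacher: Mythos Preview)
Your argument is correct and takes a genuinely different route from the paper's proof. Both proofs start from the same Weyl pairing $\mu_{k+1}(A)+\mu_{n+1-k}(\overline{A})\le -1$, but then diverge. The paper introduces the index set $P=\{k\ge 2:\mu_k\ge 0\text{ or }\overline{\mu}_{n-k+2}\ge 0\}$ and splits into three cases according to $p=|P|$: in the principal case $p=n-1$ it compares $\Vert A\Vert_\ast+\Vert\overline{A}\Vert_\ast$ with $\Vert A+\tfrac12 I_n\Vert_\ast+\Vert J_n-A-\tfrac12 I_n\Vert_\ast$ and appeals to a separately proved Theorem~\ref{th2}, while the cases $p=n-2$ and $p\le n-3$ are handled by AM--QM plus calculus and give strict inequality. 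Your approach avoids all case analysis by replacing the piecewise-linear $h(t)=|t|+|t+1|$ with the tangent parabola $p(t)=\tfrac{2}{\sqrt n}(t+\tfrac12)^2+\tfrac{\sqrt n}{2}$, touching exactly at the two conference eigenvalues; the Weyl slacks $\delta_k$ then sum to $\alpha+\beta-(n-1)$ and cancel against the leading terms, collapsing the whole estimate to a concave quadratic in $(\alpha,\beta)$ whose constrained maximum over $\alpha+\beta\ge n-1$ lies at $\alpha=\beta=(n-1)/2$ and equals $(n-1)(1+\sqrt n)$. This is shorter and self-contained (no auxiliary Theorem~\ref{th2} is needed), and in fact the calculus goes through already for $n\ge 5$, whereas the paper's case $p\le n-3$ is what forces $n\ge 7$. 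On the other hand, the paper's detour isolates Theorem~\ref{th2} and Corollary~\ref{cor2} as results of independent interest about the shifted matrices $A+\tfrac12 I_n$. For the equality analysis, note that $S=n-1$ already forces $\sum_k\delta_k=0$ and hence $\delta_k\equiv 0$; then $h(x_k)=p(x_k)$ pins each $x_k$ to $\{\nu_+,\nu_-\}$, and the trace condition fixes the multiplicities, exactly as you say.
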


A crucial role in our proof play the following two facts about nonnegative
matrices, which are of interest on their own.

\begin{theorem}
\label{th2}If $A$ is a square nonnegative matrix of size $n,$ with $\left\vert
A\right\vert _{\infty}\leq1,$ and with zero diagonal, then
\begin{equation}
\left\Vert A+\frac{1}{2}I_{n}\right\Vert _{\ast}+\left\Vert J_{n}-A-\frac
{1}{2}I_{n}\right\Vert _{\ast}\leq n+(n-1)\sqrt{n}. \label{th2in}%
\end{equation}
Equality is possible if and only if $A$ is a $\left(  0,1\right)  $-matrix,
with all row and column sums equal to $\left(  n-1\right)  /2,$ and such that
$\sigma_{i}\left(  A+\frac{1}{2}I_{n}\right)  =\sqrt{n}/2$ for $i=2,\ldots,n.$
\end{theorem}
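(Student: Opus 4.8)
The plan is to set $B=A+\tfrac12 I_n$, so that the two matrices in question become $B$ and $J_n-B$, both nonnegative with entries in $[0,1]$ and constant diagonal $\tfrac12$. Writing $b_i=\sigma_i(B)$ and $c_i=\sigma_i(J_n-B)$, I would control $\sum_i(b_i+c_i)$ by peeling off the top singular values. The first ingredient is a Frobenius estimate: since $\operatorname{tr}(B^{T}B)+\operatorname{tr}((J_n-B)^{T}(J_n-B))=\tfrac n2+\sum_{i\neq j}\bigl[A_{ij}^2+(1-A_{ij})^2\bigr]$ and $x^2+(1-x)^2\le 1$ on $[0,1]$ with equality only at $x\in\{0,1\}$, one obtains $\left\Vert B\right\Vert_{F}^{2}+\left\Vert J_n-B\right\Vert_{F}^{2}\le n^2-\tfrac n2$, with equality precisely when $A$ is a $(0,1)$-matrix.

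The second ingredient is the subadditivity of the largest singular value (Weyl's inequality) applied to $B+(J_n-B)=J_n$: since $\sigma_1(J_n)=n$, this gives $b_1+c_1\ge n$. I would then bound the tails by Cauchy--Schwarz, $\sum_{i\ge 2}b_i\le\sqrt{(n-1)(\left\Vert B\right\Vert_F^2-b_1^2)}$ and likewise for $c$, add the two, and use $\sqrt P+\sqrt Q\le\sqrt{2(P+Q)}$ together with the Frobenius bound to get $\sum_{i\ge 2}(b_i+c_i)\le\sqrt{2(n-1)\bigl(n^2-\tfrac n2-b_1^2-c_1^2\bigr)}$. Setting $t=b_1+c_1$ and using $b_1^2+c_1^2\ge t^2/2$, the whole sum is at most $g(t):=t+\sqrt{(n-1)(2n^2-n-t^2)}$. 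A one-line calculus check shows $g$ peaks at $t=\sqrt{2n-1}<n$ and is strictly decreasing for $t\ge n$, so from $t\ge n$ we conclude $\left\Vert B\right\Vert_{\ast}+\left\Vert J_n-B\right\Vert_{\ast}\le g(n)=n+(n-1)\sqrt n$, exactly the claimed bound.

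For the equality case I would trace every inequality backwards. Equality forces $A$ to be a $(0,1)$-matrix (Frobenius step); $t=n$ (strict monotonicity of $g$); $b_1=c_1$, hence $b_1=c_1=n/2$ (from $b_1^2+c_1^2\ge t^2/2$); equal tails $b_2=\cdots=b_n$ and $c_2=\cdots=c_n$ (Cauchy--Schwarz); and $\left\Vert B\right\Vert_F^2=\left\Vert J_n-B\right\Vert_F^2$ (the concavity step). A direct computation then pins $b_i=\sqrt n/2$ for $i\ge 2$, and reading off $\left\Vert B\right\Vert_F^2=\tfrac n4+\sum_{i\neq j}A_{ij}$ shows $A$ has exactly $n(n-1)/2$ ones. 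The delicate point is the constant row and column sums: the total sum of $B$ then gives $\langle\mathbf{j}_n,B\mathbf{j}_n\rangle/n=n/2=\sigma_1(B)$, so $\mathbf{j}_n/\sqrt n$ attains the operator norm of $B$; the equality case of $\langle x,By\rangle\le\sigma_1(B)\Vert x\Vert\Vert y\Vert$ (and its transpose) forces $B\mathbf{j}_n=\tfrac n2\mathbf{j}_n$ and $B^{T}\mathbf{j}_n=\tfrac n2\mathbf{j}_n$, i.e. all row and column sums of $A$ equal $(n-1)/2$. Conversely, these three conditions make $B$ normal with $BB^{T}=\tfrac n4 I_n+\tfrac{n-1}{4}J_n$, and the identity $(J_n-B)(J_n-B)^{T}=BB^{T}$ shows $J_n-B$ has the same singular values as $B$, so equality genuinely holds.

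I expect the main obstacle to be the equality analysis rather than the inequality itself: once one notices that the Weyl bound $b_1+c_1\ge n$ feeds into the tail estimate in the favourable direction and that $g$ decreases past $t=n$, the inequality is routine. The subtle part is extracting the rigid combinatorial structure, and in particular the constant row and column sums, which are not produced by any of the scalar inequalities but only by the singular-vector equality condition for the nonnegative matrix $B$.
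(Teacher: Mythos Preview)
Your proof is correct and follows essentially the same approach as the paper: peel off $\sigma_1(B)+\sigma_1(\overline B)$, bound the remaining $2(n-1)$ singular values via Cauchy--Schwarz and the Frobenius estimate $\|B\|_F^2+\|\overline B\|_F^2\le n^2-\tfrac n2$, and then use that the resulting function of $t=b_1+c_1$ is decreasing past $t=n$ together with $b_1+c_1\ge n$. The only cosmetic differences are that the paper applies AM--QM in one shot to all $2(n-1)$ tail values (rather than twice plus $\sqrt P+\sqrt Q\le\sqrt{2(P+Q)}$) and derives $b_1+c_1\ge n$ from the Rayleigh-quotient bound $\sigma_1(B)\ge n^{-1}\langle B\mathbf j_n,\mathbf j_n\rangle$ rather than from the triangle inequality for $\sigma_1$; your equality analysis is in fact a bit more explicit than the paper's, which omits the verification that $\sigma_1(B)=n^{-1}\langle B\mathbf j_n,\mathbf j_n\rangle$ forces constant row and column sums.
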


\begin{corollary}
\label{cor2}If $A$ is a symmetric nonnegative matrix of size $n,$ with
$\left\vert A\right\vert _{\infty}\leq1,$ and with zero diagonal, such that
\begin{equation}
\left\Vert A+\frac{1}{2}I_{n}\right\Vert _{\ast}+\left\Vert J_{n}-A-\frac
{1}{2}I_{n}\right\Vert _{\ast}=n+(n-1)\sqrt{n}, \label{cor2in}%
\end{equation}
then $A$ is the adjacency matrix of a conference graph.
\end{corollary}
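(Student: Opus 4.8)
The plan is to derive Corollary~\ref{cor2} from the equality characterization already contained in Theorem~\ref{th2}, by showing that the extra hypothesis of symmetry forces the extremal matrix to be the adjacency matrix of a conference graph. By Theorem~\ref{th2}, equality in (\ref{cor2in}) means that $A$ is a $(0,1)$-matrix with all row and column sums equal to $(n-1)/2$ and with $\sigma_i(A+\frac12 I_n)=\sqrt{n}/2$ for $i=2,\dots,n$. Since $A$ is now assumed symmetric, $A+\frac12 I_n$ is a symmetric matrix, so its singular values are the absolute values of its eigenvalues. First I would translate the singular-value data into eigenvalue data for $A$ itself: because $A$ is symmetric with nonnegative entries and constant row sum $(n-1)/2$, the Perron eigenvalue is $\mu_1(A)=(n-1)/2$, which is also $\sigma_1(A+\frac12 I_n)-\frac12$, and the remaining eigenvalues $\mu_2(A),\dots,\mu_n(A)$ satisfy $|\mu_i(A)+\frac12|=\sqrt{n}/2$, hence $\mu_i(A)\in\{(\sqrt n-1)/2,\,(-\sqrt n-1)/2\}$.

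Next I would pin down the two multiplicities. Let $p$ denote the multiplicity of $(\sqrt n-1)/2$ and $q$ the multiplicity of $(-\sqrt n-1)/2$ among $\mu_2,\dots,\mu_n$, so $p+q=n-1$. The trace of $A$ is zero (zero diagonal), giving one linear equation $\frac{n-1}{2}+p\cdot\frac{\sqrt n-1}{2}+q\cdot\frac{-\sqrt n-1}{2}=0$. Solving this together with $p+q=n-1$ determines $p=q=(n-1)/2$ uniquely, and incidentally forces $\sqrt n$ to behave so that the spectrum matches exactly the conference-graph eigenvalues listed after Theorem~\ref{mth}, namely $(n-1)/2$ together with $((\sqrt n-1)/2)$ and $(-(\sqrt n+1)/2)$ each with multiplicity $(n-1)/2$. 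At that point I would invoke the fact, quoted in the excerpt, that every graph whose eigenvalues are precisely these must be a conference graph.

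The main obstacle I anticipate is the passage from singular values to eigenvalues together with ensuring that the sign pattern of the eigenvalues is forced rather than assumed. The singular-value condition $\sigma_i(A+\frac12 I_n)=\sqrt n/2$ only fixes $|\mu_i(A)+\frac12|$, so a priori each $\mu_i(A)+\frac12$ could carry either sign; one must rule out, for instance, the possibility that $A+\frac12 I_n$ is not positive semidefinite in the wrong way or that additional eigenvalues coincide with $(n-1)/2+\frac12$. Here the symmetry of $A$, the integrality forced by $A$ being a $(0,1)$-matrix, and the single trace constraint work together: since $p$ and $q$ are nonnegative integers, the linear system admits only the solution $p=q=(n-1)/2$, and this in turn requires $n\equiv1\pmod 4$ and $\sqrt n$ to interact consistently with the half-integer eigenvalues, which is exactly the arithmetic rigidity characteristic of conference graphs. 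Once the spectrum is shown to coincide with the conference-graph spectrum, the conclusion follows immediately from the cited spectral characterization, so the real content is the elementary but delicate counting argument that fixes the multiplicities from the vanishing trace.
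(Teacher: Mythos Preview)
Your proposal is correct and follows essentially the same route as the paper: invoke the equality case of Theorem~\ref{th2} to get that $A$ is a symmetric $(0,1)$-matrix with constant row sums $(n-1)/2$ and $\sigma_i(A+\tfrac12 I_n)=\sqrt{n}/2$ for $i\ge 2$, convert singular values to eigenvalues via symmetry to obtain $\mu_i(A)\in\{(\sqrt{n}-1)/2,\,-(\sqrt{n}+1)/2\}$ for $i\ge 2$, and then appeal to the quoted spectral characterization of conference graphs. The only difference is that you spell out the determination of the two multiplicities from $\operatorname{tr}(A)=0$ (yielding $p=q=(n-1)/2$), whereas the paper simply asserts that ``$A$ has the spectrum of a conference graph''; your extra paragraph makes explicit a step the paper leaves to the reader. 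The potential obstacle you flag about extra eigenvalues equal to $n/2$ is not a real issue, since $\sigma_2(B)=\sqrt{n}/2\ne n/2$ already forces $\mu_1(B)=n/2$ to be simple.
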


Before proceeding with the proof of Theorem \ref{th1} let us note that the
difficulty of its proof seems due to the fact that $A$ is a symmetric matrix
and has a zero diagonal. In Theorem \ref{th3} we shall see that if these
conditions are omitted, the proof becomes really straightforward, but
unfortunately this result is not tight for graphs.

\section{Proofs of Theorems \ref{th2} and \ref{th1}}

Our proof of Theorem \ref{th2} illustrates the two main ingredients of several
proofs later. First, this is an application of the \textquotedblleft
arithmetic mean-quadratic mean\textquotedblright, or the AM-QM inequality.
This way we obtain an upper bound on the trace norm by the sum of the squares
of the singular values of a matrix, which is equal to the sum of the squares
of its entries. Second, we identify a function of the type
\[
f(x)=x+\sqrt{a-bx^{2}}%
\]
and conclude that $f(x)$ is decreasing in $x$ under certain assumptions about
$a,b$ and $x.$ This way we obtain the upper bound $f(x)\leq f\left(  \min
x\right)  .$\bigskip

\begin{proof}
[\textbf{Proof of Theorem \ref{th2}}]Set for short%
\begin{align*}
B  &  =A+\frac{1}{2}I_{n},\\
\overline{B}  &  =J_{n}-A-\frac{1}{2}I_{n}.
\end{align*}
Applying the AM-QM inequality, we see that
\begin{align}
\left\Vert B\right\Vert _{\ast}+\left\Vert \overline{B}\right\Vert _{\ast}  &
=\sigma_{1}\left(  B\right)  +\sigma_{1}\left(  \overline{B}\right)  +%
%TCIMACRO{\dsum \limits_{i=2}^{n}}%
%BeginExpansion
{\displaystyle\sum\limits_{i=2}^{n}}
%EndExpansion
\sigma_{i}\left(  B\right)  +\sigma_{i}\left(  \overline{B}\right)
\label{in1}\\
&  \leq\sigma_{1}\left(  B\right)  +\sigma_{1}\left(  \overline{B}\right)
+\sqrt{2(n-1)\left(
%TCIMACRO{\dsum \limits_{i=2}^{n}}%
%BeginExpansion
{\displaystyle\sum\limits_{i=2}^{n}}
%EndExpansion
\sigma_{i}^{2}\left(  B\right)  +\sigma_{i}^{2}\left(  \overline{B}\right)
\right)  }.\nonumber
\end{align}
On the other hand,
\begin{equation}%
%TCIMACRO{\dsum \limits_{i=1}^{n}}%
%BeginExpansion
{\displaystyle\sum\limits_{i=1}^{n}}
%EndExpansion
\sigma_{i}^{2}(B)+\sigma_{i}^{2}(\overline{B})=tr(BB^{T})+tr(\overline
{B}\text{ }\overline{B}^{T})=\sum_{i,j}\left(  B_{ij}^{2}+\overline{B}%
_{ij}^{2}\right)  \leq n^{2}-\frac{n}{2},\text{ } \label{in2}%
\end{equation}
and so
\begin{align}
\left\Vert B\right\Vert _{\ast}+\left\Vert \overline{B}\right\Vert _{\ast}  &
\leq\sigma_{1}\left(  B\right)  +\sigma_{1}\left(  \overline{B}\right)
+\sqrt{2(n-1)\left(  n^{2}-\frac{n}{2}-\sigma_{1}^{2}(B)-\sigma_{1}%
^{2}(\overline{B})\right)  }\nonumber\\
&  \leq\sigma_{1}\left(  B\right)  +\sigma_{1}\left(  \overline{B}\right)
+\sqrt{2(n-1)\left(  n^{2}-\frac{n}{2}-\frac{(\sigma_{1}\left(  B\right)
+\sigma_{1}\left(  \overline{B}\right)  )^{2}}{2}\right)  }.\nonumber
\end{align}

Furthermore, using calculus we see that the function
\[
f(x)=x+\sqrt{2(n-1)\left(  n^{2}-\frac{n}{2}-\frac{x^{2}}{2}\right)  }%
\]
is decreasing in $x$ when $x\geq n.$ On the other hand, $\sigma_{1}\left(
A\right)  $ is the operator norm of $A;$ hence
\begin{equation}
\sigma_{1}(B)+\sigma_{1}(\overline{B})\geq\frac{1}{n}\left\langle
B\mathbf{j}_{n},\mathbf{j}_{n}\right\rangle +\frac{1}{n}\left\langle
\overline{B}\mathbf{j}_{n},\mathbf{j}_{n}\right\rangle =\frac{1}%
{n}\left\langle J_{n}\mathbf{j}_{n},\mathbf{j}_{n}\right\rangle =n.
\label{opnor}%
\end{equation}
Thus, $f\left(  \sigma_{1}\left(  B\right)  +\sigma_{1}\left(  \overline
{B}\right)  \right)  \leq f\left(  n\right)  ,$ and so
\[
\left\Vert B\right\Vert _{\ast}+\left\Vert \overline{B}\right\Vert _{\ast}\leq
n+\sqrt{2(n-1)\left(  n^{2}-\frac{n}{2}-\frac{n^{2}}{2}\right)  },
\]
completing the proof of (\ref{th2in}).

If equality holds in (\ref{th2in}), then we have equality in (\ref{in1}),
(\ref{in2}), and (\ref{opnor}). Therefore, $A$ is a $\left(  0,1\right)
$-matrix,
\begin{align*}
\sigma_{1}\left(  B\right)  +\sigma_{1}\left(  \overline{B}\right)   &  =n,\\
\sigma_{1}^{2}\left(  B\right)  +\sigma_{1}^{2}\left(  \overline{B}\right)
&  =(\sigma_{1}\left(  B\right)  +\sigma_{1}\left(  \overline{B}\right)
)^{2}/2,
\end{align*}
and
\[
\sigma_{2}\left(  B\right)  =\cdots=\sigma_{n}\left(  B\right)  =\sigma
_{2}\left(  \overline{B}\right)  =\cdots=\sigma_{n}\left(  \overline
{B}\right)  .
\]
Hence $\sigma_{1}\left(  B\right)  =\sigma_{1}\left(  \overline{B}\right)
=n/2$ and%
\[
\sigma_{2}\left(  B\right)  =\cdots=\sigma_{n}\left(  B\right)  =\sqrt{n}/2.
\]
We omit the simple proof that if $\sigma_{1}(B)=\frac{1}{n}\left\langle
B\mathbf{j}_{n},\mathbf{j}_{n}\right\rangle ,$ then all row and column sums of
$B$ are equal, which in our case implies that all row and column sums of $A$
are equal to $\left(  n-1\right)  /2.$

To prove that the fact that if $A$ satisfies the listed conditions, then
equality holds in (\ref{th2in}) it is enough to check that
\[
\sigma_{i}\left(  \overline{B}\right)  =\sqrt{n}/2
\]
for $i=2,\ldots,n$. Indeed, since the row and column sums of $B$ are equal to
$n/2,$ we see that
\[
\overline{B}\text{ }\overline{B}^{T}=\left(  J_{n}-B\right)  \left(
J_{n}-B^{T}\right)  =nJ_{n}-\frac{n}{2}J_{n}-\frac{n}{2}J_{n}+BB^{T}=BB^{T}.
\]
and so, $\sigma_{i}\left(  \overline{B}\right)  =\sigma_{i}\left(  B\right)
=\sqrt{n}/2.$
\end{proof}

\begin{proof}
[Proof of Corollary \ref{cor2}]From Theorem \ref{th2} we know that $A$ is a
symmetric $\left(  0,1\right)  $ matrix of a regular graph of degree $\left(
n-1\right)  /2.$ Since $\mu_{i}\left(  A+\frac{1}{2}I_{n}\right)  =\pm\sqrt
{n}/2$ for $i=2,\ldots,n,$ then $\mu_{i}\left(  A\right)  =\left(  \sqrt
{n}-1\right)  /2$ or $\mu_{i}\left(  A\right)  =-\left(  \sqrt{n}+1\right)
/2$ for $i=2,\ldots,n.$ Using the fact $\mu_{1}\left(  A\right)  =\left(
n-1\right)  /2,$ we see that $A$ has the spectrum of a conference graph, and
therefore $A$ is the adjacency matrix of a conference graph. This completes
the proof of Corollary \ref{cor2}.
\end{proof}

\begin{proof}
[\textbf{Proof of Theorem \ref{th1}}]Our main goal is to prove inequality
(\ref{main1}). To keep the proof streamlined we have freed it of a large
number of easy, but tedious calculations.

Assume that $n\geq7$ and let $A$ be a matrix satisfying the conditions of the
theorem and such that
\[
\left\Vert A\right\Vert _{\ast}+\left\Vert J_{n}-I_{n}-A\right\Vert _{\ast}%
\]
is maximal. For short, let
\[
\overline{A}=J_{n}-I_{n}-A,
\]
and set
\[
\mu_{k}=\mu_{k}\left(  A\right)  ,\text{ \ }\overline{\mu}_{k}=\mu_{k}\left(
\overline{A}\right)
\]
for every $k\in\left[  n\right]  .$

We start with the following particular case of Weyl's inequalities: for every
$k=2,\ldots,n,$
\begin{equation}
\mu_{k}+\overline{\mu}_{n-k+2}\leq\mu_{2}\left(  J_{n}-I_{n}\right)  =-1.
\label{Wein}%
\end{equation}

Write $n^{+}\left(  A\right)  $ for the number of nonnegative eigenvalues of a
matrix $A.$ To keep track of the signs of $\mu_{k}$ and $\overline{\mu
}_{n-k+2},$ define the set
\[
P=\left\{  k\text{ }|\text{ }2\leq k\leq n,\text{ }\mu_{k}\geq0\text{ or
}\overline{\mu}_{n-k+2}\geq0\right\}  ,
\]
and let $p=\left\vert P\right\vert .$ Note that if $k\in P$ and $\mu_{k}%
\geq0,$ then (\ref{Wein}) implies that $\overline{\mu}_{n-k+2}<0.$ Thus, in
view of $\mu_{1}\geq0$ and $\overline{\mu}_{1}\geq0,$ we see that
$n^{+}\left(  A\right)  +n^{+}\left(  \overline{A}\right)  =p+2$.

The pivotal point of our proof is the value of $p,$ which obviously is at most
$n-1.$ If $p=n-1,$ we shall finish the proof by Theorem \ref{th2} and
Corollary \ref{cor2}. In the remaining cases, when $p=n-2$ or $p<n-2,$ we
shall show that strict inequality holds in (\ref{main1}). We note that these
two cases require distinct proofs, albeit very similar in spirit.

Let first $p=n-1.$ Set for short
\[
B=A+\frac{1}{2}I_{n},\text{ \ }\overline{B}=J_{n}-A-\frac{1}{2}I_{n}%
=\overline{A}+\frac{1}{2}I_{n},
\]
and note that
\[%
%TCIMACRO{\dsum \limits_{i=1}^{n}}%
%BeginExpansion
{\displaystyle\sum\limits_{i=1}^{n}}
%EndExpansion
\mu_{i}(B)=tr\left(  B\right)  =\frac{n}{2},
\]
which implies that
\[%
%TCIMACRO{\dsum \limits_{\mu_{i}(B)<0}}%
%BeginExpansion
{\displaystyle\sum\limits_{\mu_{i}(B)<0}}
%EndExpansion
\left\vert \mu_{i}(B)\right\vert =%
%TCIMACRO{\dsum \limits_{\mu_{i}(B)\geq0}}%
%BeginExpansion
{\displaystyle\sum\limits_{\mu_{i}(B)\geq0}}
%EndExpansion
\mu_{i}(B)-\frac{n}{2},
\]
and by symmetry, also that%
\[%
%TCIMACRO{\dsum \limits_{\mu_{i}(\overline{B})<0}}%
%BeginExpansion
{\displaystyle\sum\limits_{\mu_{i}(\overline{B})<0}}
%EndExpansion
\left\vert \mu_{i}(\overline{B})\right\vert =%
%TCIMACRO{\dsum \limits_{\mu_{i}(\overline{B})\geq0}}%
%BeginExpansion
{\displaystyle\sum\limits_{\mu_{i}(\overline{B})\geq0}}
%EndExpansion
\mu_{i}(\overline{B})-\frac{n}{2}.
\]
Hence, we find that
\begin{equation}
\left\Vert B\right\Vert _{\ast}+\left\Vert \overline{B}\right\Vert _{\ast}=%
%TCIMACRO{\dsum \limits_{i=1}^{n}}%
%BeginExpansion
{\displaystyle\sum\limits_{i=1}^{n}}
%EndExpansion
\left\vert \mu_{i}(B)\right\vert +\left\vert \mu_{i}(\overline{B})\right\vert
=2%
%TCIMACRO{\dsum \limits_{\mu_{i}(B)\geq0}}%
%BeginExpansion
{\displaystyle\sum\limits_{\mu_{i}(B)\geq0}}
%EndExpansion
\mu_{i}(B)+2%
%TCIMACRO{\dsum \limits_{\mu_{i}(\overline{B})\geq0}}%
%BeginExpansion
{\displaystyle\sum\limits_{\mu_{i}(\overline{B})\geq0}}
%EndExpansion
\mu_{i}(\overline{B})-n. \label{in3}%
\end{equation}
On the other hand, we see that
\begin{align*}%
%TCIMACRO{\dsum \limits_{\mu_{i}(B)\geq0}}%
%BeginExpansion
{\displaystyle\sum\limits_{\mu_{i}(B)\geq0}}
%EndExpansion
\mu_{i}(B)+%
%TCIMACRO{\dsum \limits_{\mu_{i}(\overline{B})\geq0}}%
%BeginExpansion
{\displaystyle\sum\limits_{\mu_{i}(\overline{B})\geq0}}
%EndExpansion
\mu_{i}(\overline{B})  &  =%
%TCIMACRO{\dsum \limits_{\mu_{i}\geq-1/2}}%
%BeginExpansion
{\displaystyle\sum\limits_{\mu_{i}\geq-1/2}}
%EndExpansion
\left\vert \mu_{i}+1/2\right\vert +%
%TCIMACRO{\dsum \limits_{\overline{\mu}_{i}\geq-1/2}}%
%BeginExpansion
{\displaystyle\sum\limits_{\overline{\mu}_{i}\geq-1/2}}
%EndExpansion
\left\vert \overline{\mu}_{i}+1/2\right\vert \\
&  \geq%
%TCIMACRO{\dsum \limits_{\mu_{i}\geq0}}%
%BeginExpansion
{\displaystyle\sum\limits_{\mu_{i}\geq0}}
%EndExpansion
\left(  \mu_{i}+1/2\right)  +%
%TCIMACRO{\dsum \limits_{\overline{\mu}_{i}\geq0}}%
%BeginExpansion
{\displaystyle\sum\limits_{\overline{\mu}_{i}\geq0}}
%EndExpansion
\left(  \overline{\mu}_{i}+1/2\right) \\
&  =\frac{1}{2}\left\Vert A\right\Vert _{\ast}+\frac{1}{2}\left\Vert
\overline{A}\right\Vert _{\ast}+\frac{1}{2}n^{+}\left(  A\right)  +\frac{1}%
{2}n^{+}\left(  \overline{A}\right) \\
&  =\frac{1}{2}\left\Vert A\right\Vert _{\ast}+\frac{1}{2}\left\Vert
\overline{A}\right\Vert _{\ast}+\frac{1}{2}\left(  p+2\right)
\end{align*}

Therefore, in view of (\ref{in3}),%
\[
\left\Vert B\right\Vert _{\ast}+\left\Vert \overline{B}\right\Vert _{\ast}%
\geq\left\Vert A\right\Vert _{\ast}+\left\Vert \overline{A}\right\Vert _{\ast
}+p+2-n=\left\Vert A\right\Vert _{\ast}+\left\Vert \overline{A}\right\Vert
_{\ast}+1.
\]
and inequality (\ref{main1}) follows by Theorem \ref{th2} applied to the
matrix $B.$ This completes the proof when $p=n-1.$ Note that the
characterization of equality in (\ref{main1}) comes directly from Corollary
\ref{cor2}, as in the cases when $p=n-2$ or $p<n-2,$ a strict inequality
always holds in (\ref{main1}).

Let now $p=n-2.$ That is to say, there exists exactly one $k\in\left\{
2,\ldots,n\right\}  $ such that $\mu_{k}<0$ and $\overline{\mu}_{n-k+2}<0.$
Then, setting
\begin{align*}
x  &  =\mu_{1}+\overline{\mu}_{1},\\
y  &  =\left\vert \mu_{k}\right\vert +\left\vert \overline{\mu}_{n-k+2}%
\right\vert ,
\end{align*}
we see that $y\geq1,$ and also
\begin{equation}
x=\mu_{1}+\overline{\mu}_{1}=-%
%TCIMACRO{\dsum \limits_{i=2}^{n}}%
%BeginExpansion
{\displaystyle\sum\limits_{i=2}^{n}}
%EndExpansion
\mu_{i}+\overline{\mu}_{n-i+2}\geq y+n-2\geq n-1. \label{inx1}%
\end{equation}

By the definition of $P$ and Weyl's inequalities (\ref{Wein}), for each $i\in
P,$ we have%
\[
\mu_{i}^{2}+\overline{\mu}_{n-i+2}^{2}=\frac{\left(  \left\vert \mu
_{i}\right\vert +\left\vert \overline{\mu}_{n-i+2}\right\vert \right)  ^{2}%
}{2}+\frac{\left(  \left\vert \mu_{i}\right\vert -\left\vert \overline{\mu
}_{n-i+2}\right\vert \right)  ^{2}}{2}\geq\frac{\left(  \left\vert \mu
_{i}\right\vert +\left\vert \overline{\mu}_{n-i+2}\right\vert \right)  ^{2}%
}{2}+\frac{1}{2}.
\]
Therefore,
\begin{align*}
n(n-1)  &  \geq\sum_{i,j}A_{ij}^{2}+\overline{A}_{ij}^{2}=\sum_{i=1}^{n}%
\mu_{i}^{2}+\overline{\mu}_{i}^{2}\\
&  =\mu_{1}^{2}+\overline{\mu}_{1}^{2}+\mu_{k}^{2}+\overline{\mu}_{n-k+2}^{2}+%
%TCIMACRO{\dsum \limits_{i\in P}}%
%BeginExpansion
{\displaystyle\sum\limits_{i\in P}}
%EndExpansion
\mu_{i}^{2}+\overline{\mu}_{n-i+2}^{2}\\
&  \geq\frac{x^{2}}{2}+\frac{y^{2}}{2}+%
%TCIMACRO{\dsum \limits_{i\in P}}%
%BeginExpansion
{\displaystyle\sum\limits_{i\in P}}
%EndExpansion
\frac{\left(  \left\vert \mu_{i}\right\vert +\left\vert \overline{\mu}%
_{n-i+2}\right\vert \right)  ^{2}}{2}+\frac{p}{2}\\
&  \geq\frac{x^{2}}{2}+\frac{y^{2}}{2}+\frac{1}{2p}\left(
%TCIMACRO{\dsum \limits_{i\in P}}%
%BeginExpansion
{\displaystyle\sum\limits_{i\in P}}
%EndExpansion
\left\vert \mu_{i}\right\vert +\left\vert \overline{\mu}_{n-i+2}\right\vert
\right)  ^{2}+\frac{p}{2}.
\end{align*}

Replacing $p$ by $n-2,$ after some simple algebra we find that
\[
\left\Vert A\right\Vert _{\ast}+\left\Vert \overline{A}\right\Vert _{\ast}\leq
x+y+\sqrt{2\left(  n-2\right)  \left(  n(n-1)-\frac{n-2}{2}-\frac{x^{2}}%
{2}-\frac{y^{2}}{2}\right)  }.
\]

We shall show that the function
\[
f(x,y)=x+y+\sqrt{2(n-2)\left(  n(n-1)-\frac{n-2}{2}-\frac{x^{2}}{2}%
-\frac{y^{2}}{2}\right)  }%
\]
is decreasing in $x$ for $x\geq n-1$ and $y\geq1.$ Indeed, otherwise there
exist $x\geq n-1$ and $y\geq1$ such that
\[
1-\frac{(n-2)x}{\sqrt{2(n-2)\left(  n(n-1)-\frac{n-2}{2}-\frac{x^{2}}{2}%
-\frac{y^{2}}{2}\right)  }}=\frac{\partial f(x,y)}{\partial x}\geq0,
\]
which is a contradiction for $n\geq5.$

Now, (\ref{inx1}) implies that
\begin{align*}
f(x,y)  &  \leq f(y+n-2,y)\\
&  =2y+n-2+\sqrt{(n-2)\left(  2n(n-1)-(n-2)-(y+n-2)^{2}-y^{2}\right)  }.
\end{align*}

Furthermore, using calculus, we see that $f(y+n-2,y)$ is decreasing in $y$ for
$n\geq6$ and $y\geq1.$ Therefore,
\[
f(y+n-2,y)\leq f(n-1,1)=n+\sqrt{n(n-1)\left(  n-2\right)  },
\]
and so
\[
\left\Vert A\right\Vert _{\ast}+\left\Vert \overline{A}\right\Vert _{\ast}\leq
n+\sqrt{n(n-1)\left(  n-2\right)  }.
\]
It is not hard to see that for $n\geq6$,
\[
n+\sqrt{n(n-1)\left(  n-2\right)  }<n-1+(n-1)\sqrt{n},
\]
completing the proof of (\ref{main1}) when $p=n-2.$

Let now $p\leq n-3.$ Then there exist two distinct $k,j\in\left\{
2,\ldots,n\right\}  \backslash P.$ Let%
\begin{align*}
x  &  =\mu_{1}+\overline{\mu}_{1},\\
y  &  =\left\vert \mu_{k}\right\vert +\left\vert \overline{\mu}_{n-k+2}%
\right\vert +\left\vert \mu_{j}\right\vert +\left\vert \overline{\mu}%
_{n-j+2}\right\vert
\end{align*}
From the definition of $P$ and Weyl's inequalities (\ref{Wein}) we have
\[
y=\left\vert \mu_{k}\right\vert +\left\vert \overline{\mu}_{n-k+2}\right\vert
+\left\vert \mu_{j}\right\vert +\left\vert \overline{\mu}_{n-j+2}\right\vert
\geq2,
\]
and also, by $tr\left(  A\right)  =tr\left(  \overline{A}\right)  =0,$
\begin{equation}
x=\mu_{1}+\overline{\mu}_{1}=-%
%TCIMACRO{\dsum \limits_{i=2}^{n}}%
%BeginExpansion
{\displaystyle\sum\limits_{i=2}^{n}}
%EndExpansion
\mu_{i}+\overline{\mu}_{n-i+2}\geq y+n-3\geq n-1. \label{inx2}%
\end{equation}

As in the previous case, we obtain
\[
\left\Vert A\right\Vert _{\ast}+\left\Vert \overline{A}\right\Vert _{\ast}\leq
x+y+\sqrt{2(n-3)\left(  n(n-1)-\frac{1}{2}-\frac{x^{2}}{2}-\frac{y^{2}}%
{4}\right)  }.
\]

Next, using calculus, we find that the function
\[
f\left(  x,y\right)  =x+y+\sqrt{2(n-3)\left(  n(n-1)-\frac{1}{2}-\frac{x^{2}%
}{2}-\frac{y^{2}}{4}\right)  }%
\]
is decreasing in $x$ for $n\geq5,$ $x\geq n-1$ and $y\geq2$. Thus,
(\ref{inx2}) implies that
\[
f(x,y)\leq f(y+n-3,y).
\]
Again using calculus, we find that for $n\geq7$ and $y\geq2,$ the function
\[
f(y+n-3,y)=2y+n-3+\sqrt{2(n-3)\left(  n(n-1)-\frac{1}{2}-\frac{(y+n-3)^{2}}%
{2}-\frac{y^{2}}{4}\right)  }%
\]
is decreasing in $y$. Therefore,
\begin{align*}
\left\Vert A\right\Vert _{\ast}+\left\Vert \overline{A}\right\Vert _{\ast}  &
\leq f\left(  x,y\right)  \leq f(y+n-3,y)\leq f(n-1,2)\\
&  =n+1+\sqrt{(n-3)\left(  n^{2}-4\right)  }.
\end{align*}
A simple calculation shows that for $n\geq5$
\[
n+1+\sqrt{(n-3)\left(  n^{2}-4\right)  }<n-1+(n-1)\sqrt{n},
\]
completing the proof of Theorem \ref{th1}.
\end{proof}

\section{\label{FD}Further extensions}

An obvious question that arises from the above results is the possibility to
extend them to non-symmetric and possibly non-square, nonnegative matrices. We
state such extensions in Theorem \ref{th3} and \ref{th3a} below.

Also, the above results focus on the trace norm of matrices, which is known to
be a particular case of the Ky Fan $k$-norms on the one hand, and of the
Schatten $p$-norms on the other. Extremal properties of these norms have been
studied in connection to graphs and nonnegative matrices in \cite{Nik11} and
\cite{Nik12}. It has been shown that many known results for graph energy carry
over these more general norms. Below, the same trend is illustrated for the
extremal problems discussed above.

We denote the Ky Fan $k$-norm of a matrix $A$ by $\left\Vert A\right\Vert
_{\ast k},$ that is to say%
\[
\left\Vert A\right\Vert _{\ast k}=\sigma_{1}\left(  A\right)  +\cdots
+\sigma_{k}\left(  A\right)  .
\]
Also, $J_{m,n}$ will stand for the $m\times n$ matrix of all ones. In what
follows we shall focus only on the Ky Fan norms. The Schatten $p$-norms seem
of a slightly different flavor and we shall leave them for further study.

\begin{theorem}
\label{th3}If $2\leq k\leq m\leq n$ and $A$ is a nonnegative matrix of size
$m\times n,$ with $\left\vert A\right\vert _{\infty}\leq1,$ then
\begin{equation}
\left\Vert A\right\Vert _{\ast k}+\left\Vert J_{m,n}-A\right\Vert _{\ast
k}\leq\sqrt{mn}\left(  1+\sqrt{k-1}\right)  . \label{th3in}%
\end{equation}

\end{theorem}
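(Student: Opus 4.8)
The plan is to follow the same two-ingredient strategy flagged by the authors at the start of Section 2: bound the Ky Fan norm from above by a quadratic-mean quantity (the sum of squares of singular values, which equals a trace), and then reduce the resulting expression to a single-variable function that is shown to be decreasing past a known lower bound on the leading singular value. The key simplification here, relative to Theorem \ref{th2}, is that $A$ is neither symmetric nor required to have zero diagonal, so all $mn$ entries are free to lie in $[0,1]$; this is exactly what the authors mean when they say the proof "becomes really straightforward."

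First I would set $\overline{A}=J_{m,n}-A$ and split each Ky Fan norm as
\[
\left\Vert A\right\Vert _{\ast k}=\sigma_{1}(A)+\sum_{i=2}^{k}\sigma_{i}(A),
\qquad
\left\Vert \overline{A}\right\Vert _{\ast k}=\sigma_{1}(\overline{A})+\sum_{i=2}^{k}\sigma_{i}(\overline{A}).
\]
Applying the AM-QM inequality to the $2(k-1)$ terms $\sigma_2,\dots,\sigma_k$ of $A$ and $\overline{A}$ gives
\[
\sum_{i=2}^{k}\sigma_{i}(A)+\sigma_{i}(\overline{A})
\leq\sqrt{2(k-1)\sum_{i=2}^{k}\bigl(\sigma_{i}^{2}(A)+\sigma_{i}^{2}(\overline{A})\bigr)}.
\]
Next I would control the sum of squares. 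Since $\sum_{i}\sigma_i^2(A)=\operatorname{tr}(AA^{T})=\sum_{i,j}A_{ij}^2\le mn$ and likewise for $\overline{A}$, and since $A_{ij}^2+\overline{A}_{ij}^2=A_{ij}^2+(1-A_{ij})^2\le 1$ entrywise, we get $\sum_{i}\sigma_{i}^2(A)+\sigma_{i}^2(\overline{A})\le mn$. Dropping the $\sigma_1$ terms from this budget and then applying the convexity bound $\sigma_1^2(A)+\sigma_1^2(\overline{A})\ge(\sigma_1(A)+\sigma_1(\overline{A}))^2/2$ lets me write everything in terms of the single quantity $x=\sigma_1(A)+\sigma_1(\overline{A})$, yielding
\[
\left\Vert A\right\Vert _{\ast k}+\left\Vert \overline{A}\right\Vert _{\ast k}
\leq x+\sqrt{2(k-1)\Bigl(mn-\tfrac{x^{2}}{2}\Bigr)}=:f(x).
\]

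The final step is the $f(x)\le f(\min x)$ argument. As in the proof of Theorem \ref{th2}, elementary calculus shows $f$ is decreasing once $x$ is large enough, so I need a lower bound on $x$. Using the Rayleigh-type estimate $\sigma_1(A)\ge\langle A\mathbf{v},\mathbf{u}\rangle$ for unit vectors $\mathbf{u},\mathbf{v}$, take $\mathbf{u}=\mathbf{j}_m/\sqrt{m}$ and $\mathbf{v}=\mathbf{j}_n/\sqrt{n}$; then $\sigma_1(A)+\sigma_1(\overline{A})\ge\tfrac{1}{\sqrt{mn}}\langle J_{m,n}\mathbf{j}_n,\mathbf{j}_m\rangle=\sqrt{mn}$, so $x\ge\sqrt{mn}$. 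Substituting $x=\sqrt{mn}$ into $f$ gives $f(\sqrt{mn})=\sqrt{mn}+\sqrt{2(k-1)\cdot mn/2}=\sqrt{mn}(1+\sqrt{k-1})$, which is exactly the claimed bound (\ref{th3in}). The main thing to verify carefully is that $f$ is decreasing on the whole range $x\ge\sqrt{mn}$; this amounts to checking that $f'(x)=1-(k-1)x/\sqrt{2(k-1)(mn-x^2/2)}\le 0$ there, i.e. that the negative contribution of shrinking $\sqrt{mn-x^2/2}$ dominates, which holds because $k\ge2$ forces the radical term to carry enough weight. I expect no real obstacle beyond this routine monotonicity check, since the absence of symmetry and diagonal constraints removes precisely the complications that made Theorem \ref{th1} delicate.
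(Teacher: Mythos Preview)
Your proposal is correct and follows essentially the same approach as the paper: split off $\sigma_1(A)+\sigma_1(\overline{A})$, apply AM--QM to the remaining $2(k-1)$ singular values, bound the total sum of squares by $mn$ via the entrywise inequality $A_{ij}^2+(1-A_{ij})^2\le 1$, reduce to the one-variable function $f(x)=x+\sqrt{2(k-1)(mn-x^2/2)}$, and evaluate at the lower bound $x=\sqrt{mn}$ coming from the Rayleigh-type estimate with the all-ones vectors. Your monotonicity check is exactly the one needed, and your observation that $k\ge 2$ is what makes $f'(x)\le 0$ for $x\ge\sqrt{mn}$ is the right reason.
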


\begin{proof}
Set for short $\overline{A}=J_{m,n}-A.$ Following familiar arguments, we see
that
\begin{align*}
\left\Vert A\right\Vert _{\ast k}+\left\Vert \overline{A}\right\Vert _{\ast
k}  &  =\sigma_{1}\left(  A\right)  +\sigma_{1}\left(  \overline{A}\right)  +%
%TCIMACRO{\dsum \limits_{i=2}^{k}}%
%BeginExpansion
{\displaystyle\sum\limits_{i=2}^{k}}
%EndExpansion
\sigma_{i}\left(  A\right)  +\sigma_{i}\left(  \overline{A}\right) \\
&  \leq\sigma_{1}\left(  A\right)  +\sigma_{1}\left(  \overline{A}\right)
+\sqrt{2\left(  k-1\right)  \left(
%TCIMACRO{\dsum \limits_{i=2}^{k}}%
%BeginExpansion
{\displaystyle\sum\limits_{i=2}^{k}}
%EndExpansion
\sigma_{i}^{2}\left(  A\right)  +\sigma_{i}^{2}\left(  \overline{A}\right)
\right)  }\\
&  \leq\sigma_{1}\left(  A\right)  +\sigma_{1}\left(  \overline{A}\right)
+\sqrt{2\left(  k-1\right)  \left(  mn-\frac{(\sigma_{1}\left(  A\right)
+\sigma_{1}\left(  \overline{A}\right)  )^{2}}{2}\right)  .}%
\end{align*}
Since the function
\[
f(x)=x+\sqrt{2(k-1)\left(  mn-\frac{x^{2}}{2}\right)  }%
\]
is decreasing in $x$ for $x\geq\sqrt{mn}$, and also
\[
\sigma_{1}(A)+\sigma_{1}(\overline{A})\geq\frac{1}{\sqrt{mn}}\left\langle
A\mathbf{j}_{n},\mathbf{j}_{m}\right\rangle +\frac{1}{\sqrt{mn}}\left\langle
\overline{A}\mathbf{j}_{n},\mathbf{j}_{m}\right\rangle =\frac{1}{\sqrt{mn}%
}\left\langle J_{m,n}\mathbf{j}_{n},\mathbf{j}_{m}\right\rangle =\sqrt{mn},
\]
we see that
\[
\left\Vert A\right\Vert _{\ast}+\left\Vert \overline{A}\right\Vert _{\ast}%
\leq\sqrt{mn}+\sqrt{\left(  k-1\right)  mn}=\sqrt{mn}\left(  1+\sqrt
{k-1}\right)  ,
\]
completing the proof of Theorem \ref{th3}.
\end{proof}

We cannot describe exhaustively the cases of equality in (\ref{th3in}).
However we shall describe a general construction proving that (\ref{th3in}) is
exact in a rich set of cases.

\begin{theorem}
\label{th3a}Let $k\geq2$ be an integer for which there is a Hadamard matrix of
size $k-1.$ Let $p,q\geq1$ be arbitrary integers and set $m=2p\left(
k-1\right)  $ and $n=2q\left(  k-1\right)  .$ There exists a $\left(
0,1\right)  $-matrix $A$ of size $m\times n$ such that
\[
\left\Vert A\right\Vert _{\ast k}+\left\Vert J_{m,n}-A\right\Vert _{\ast
k}=\sqrt{mn}\left(  1+\sqrt{k-1}\right)  .
\]

\end{theorem}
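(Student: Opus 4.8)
The plan is to exhibit a single matrix $A$ that forces every inequality in the proof of Theorem \ref{th3} to become an equality, and the cleanest route is to pass to the $\pm1$ matrix $N=2A-J_{m,n}$. First I would record what equality in (\ref{th3in}) demands. Tracing the three inequalities in that proof, equality requires: $A$ is a $\left(0,1\right)$-matrix (so the entrywise bound $\sum_{i,j}(A_{ij}^{2}+\overline{A}_{ij}^{2})\le mn$ is tight); $\sigma_{1}(A)=\sigma_{1}(\overline{A})$ with $\mathbf{j}$ a leading singular vector of both, forcing $\sigma_{1}(A)=\sigma_{1}(\overline{A})=\sqrt{mn}/2$; the matrices $A,\overline{A}$ have rank at most $k$; and $\sigma_{2}(A)=\cdots=\sigma_{k}(A)=\sigma_{2}(\overline{A})=\cdots=\sigma_{k}(\overline{A})$. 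A short count then pins the common subleading value to $\sqrt{mn}/\bigl(2\sqrt{k-1}\bigr)$.

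Next I would reformulate these requirements in terms of $N=2A-J_{m,n}$. Writing $A=\tfrac{1}{2}(J_{m,n}+N)$, the matrix $A$ is $\left(0,1\right)$ exactly when $N$ has $\pm1$ entries, and the spectral requirements become: $N\mathbf{j}_{n}=0$ and $N^{T}\mathbf{j}_{m}=0$ (zero row and column sums), $N$ has rank $k-1$, and all $k-1$ nonzero singular values of $N$ equal $\sqrt{mn/(k-1)}$. The key algebraic point, which I would verify through the orthogonal splittings $\mathbb{R}^{n}=\operatorname{span}(\mathbf{j}_{n})\oplus\mathbf{j}_{n}^{\perp}$ and $\mathbb{R}^{m}=\operatorname{span}(\mathbf{j}_{m})\oplus\mathbf{j}_{m}^{\perp}$, is that once $N$ annihilates $\mathbf{j}_{n}$ and $N^{T}$ annihilates $\mathbf{j}_{m}$, the map $A$ respects these splittings: it sends $\mathbf{j}_{n}$ to $\tfrac{n}{2}\mathbf{j}_{m}$ and acts as $\tfrac{1}{2}N$ on $\mathbf{j}_{n}^{\perp}$. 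Hence the singular values of $A$ are $\sqrt{mn}/2$ together with $\tfrac{1}{2}$ times the nonzero singular values of $N$. Since $\overline{A}=\tfrac{1}{2}(J_{m,n}-N)$ and $-N$ has the same singular values as $N$, the matrix $\overline{A}$ is handled simultaneously.

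The construction is then $N=L\otimes H$, where $H$ is the given Hadamard matrix of order $k-1$ (so $HH^{T}=(k-1)I_{k-1}$ and every singular value of $H$ equals $\sqrt{k-1}$) and $L=uv^{T}$ is the rank-one $2p\times 2q$ sign matrix built from the balanced vectors $u=(1,\dots,1,-1,\dots,-1)\in\{\pm1\}^{2p}$ and $v=(1,\dots,1,-1,\dots,-1)\in\{\pm1\}^{2q}$. I would check that $L$ is a $\pm1$ matrix of rank one with zero row and column sums and single nonzero singular value $\left\Vert u\right\Vert \left\Vert v\right\Vert =2\sqrt{pq}$. Tensoring with $H$ keeps all entries in $\{\pm1\}$, yields zero row and column sums because $L\mathbf{j}_{2q}=0$ and $L^{T}\mathbf{j}_{2p}=0$, and, by multiplicativity of singular values under Kronecker products, produces exactly $k-1$ nonzero singular values, each equal to $2\sqrt{pq}\cdot\sqrt{k-1}=\sqrt{mn/(k-1)}$, as required.

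Finally I would assemble the norm directly: $A$ has rank $k$ with $\sigma_{1}(A)=\sqrt{mn}/2$ and $\sigma_{2}(A)=\cdots=\sigma_{k}(A)=\sqrt{mn}/\bigl(2\sqrt{k-1}\bigr)$, so $\left\Vert A\right\Vert _{\ast k}=\tfrac{\sqrt{mn}}{2}\bigl(1+\sqrt{k-1}\bigr)$, and the same for $\overline{A}$, giving the claimed identity. I expect the only genuinely delicate step to be the singular-value bookkeeping in the reduction to $N$, in particular confirming that $\sqrt{mn}/2$ really is the \emph{largest} singular value and so occupies the $\sigma_{1}$ slot; this reduces to the elementary inequality $(k-1)\sqrt{pq}\ge\sqrt{(k-1)pq}$, valid for $k\ge2$. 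Everything past the Kronecker-product identity is routine arithmetic.
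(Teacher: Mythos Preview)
Your proof is correct and is essentially the paper's own argument. Your matrix $N=L\otimes H$ with $L=uv^{T}$ and $u=(\mathbf{1}_{p};-\mathbf{1}_{p})$, $v=(\mathbf{1}_{q};-\mathbf{1}_{q})$ satisfies $L=\left(\begin{smallmatrix}1&-1\\-1&1\end{smallmatrix}\right)\otimes J_{p,q}$, so after reordering Kronecker factors $N$ coincides (up to a row/column permutation) with the paper's $H'\otimes J_{p,q}$ where $H'=\left(\begin{smallmatrix}1&-1\\-1&1\end{smallmatrix}\right)\otimes H$; the singular-value bookkeeping via the orthogonal splitting along $\mathbf{j}$ is likewise the same as the paper's, just phrased more explicitly.
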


\begin{proof}
Indeed let $H$ be a Hadamard matrix of size $k-1,$ and set
\[
H^{\prime}=\left(
\begin{array}
[c]{cc}%
H & -H\\
-H & H
\end{array}
\right)  =\left(
\begin{array}
[c]{cc}%
1 & -1\\
-1 & 1
\end{array}
\right)  \otimes H,
\]
and
\[
A=\frac{1}{2}\left(  \left(  H^{\prime}\otimes J_{p,q}\right)  +J_{m,n}%
\right)  ,
\]
where $\otimes$ denotes the Kronecker product of matrices. First note that $A$
is a $\left(  0,1\right)  $-matrix of size $m\times n.$

Our main goal is to show that $\sigma_{1}\left(  A\right)  =\sqrt{mn}/2$ and
that%
\[
\sigma_{2}\left(  A\right)  =\cdots=\sigma_{k}\left(  A\right)  =\frac
{\sqrt{mn}}{2\sqrt{k-1}}.
\]
It is known that $H$ has $k-1$ singular values which are equal to $\sqrt
{k-1}.$ Therefore $H^{\prime}$ has $k-1$ singular values all equal to
$2\sqrt{k-1}.$ Next, we see that $H^{\prime}\otimes J_{p,q}$ has $k-1$ nonzero
singular values, all equal to $2\sqrt{pq\left(  k-1\right)  }.$ We also see
that the row and column sums of $H^{\prime}$ are $0$ and thus $0$ is a
singular value of $H^{\prime}$ with singular vectors $\mathbf{j}_{2\left(
k-1\right)  },\mathbf{j}_{2\left(  k-1\right)  };$ hence, $0$ is a singular
value of $H^{\prime}\otimes J_{p,q}$ with singular vectors $\mathbf{j}_{m}$
and $\mathbf{j}_{n}.$ Since $\mathbf{j}_{m}$ and $\mathbf{j}_{n}$ are also
singular vectors to the unique nonzero singular value of $J_{m,n},$ it is easy
to see that the singular values of $A$ are exactly as described above.

Hence
\[
\left\Vert A\right\Vert _{\ast k}=\frac{\sqrt{mn}}{2}+\frac{\sqrt{mn}}%
{2\sqrt{k-1}}\left(  k-1\right)  =\frac{\sqrt{mn}}{2}\left(  1+\sqrt
{k-1}\right)  .
\]
On the other hand%
\[
J_{m,n}-A=\frac{1}{2}\left(  \left(  -H^{\prime}\otimes J_{p,q}\right)
+J_{m,n}\right)
\]
and so the $J_{m,n}-A$ has the same singular values as $A,$ since $-H$ is a
Hadamard matrix as well. This complete the proof of Theorem \ref{th3a}.
\end{proof}

Two open problems arise in connection to Theorems \ref{th1}, \ref{th3} and
\ref{th3a}.

\begin{problem}
Describe all matrices $A$ for which equality holds in (\ref{th3in}).
\end{problem}

\begin{problem}
Let $A$ be a symmetric nonnegative matrix of size $n,$ with $\left\vert
A\right\vert _{\infty}\leq1,$ with zero diagonal. Find the maximum of
\[
\left\Vert A\right\Vert _{\ast k}+\left\Vert J_{n}-I_{n}-A\right\Vert _{\ast
k}.
\]

\end{problem}

It seems very likely that the latter problem can be solved along the lines of
Theorem \ref{th2}.

Note that we have stated and proved Theorem \ref{th3} for $k\geq2.$ As it
turns out the important case $k=1$ (operator norm) is indeed particular.

\begin{theorem}
\label{th4}If $A$\emph{ is an }$m\times n$\emph{ nonnegative matrix, with
}$\left\vert A\right\vert _{\infty}\leq1$\emph{, then,}
\begin{equation}
\sigma_{1}\left(  A\right)  +\sigma_{1}\left(  J_{m,n}-A\right)  \leq
\sqrt{2mn}, \label{sig1in}%
\end{equation}
with equality holding if and only if $mn$ is even, and $A$ is a $\left(
0,1\right)  $-matrix with precisely $mn/2$ ones that are contained either in
$n/2$ columns or in $m/2$ rows.
\end{theorem}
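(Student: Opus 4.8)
The plan is to reuse the two ingredients emphasized before the proof of Theorem \ref{th2}, but now in their simplest form, since here $k=1$. Write $\overline{A}=J_{m,n}-A$, which is again nonnegative because $\left\vert A\right\vert_{\infty}\leq 1$. The ``AM-QM'' step degenerates into the Cauchy-Schwarz inequality applied to the two numbers $\sigma_1(A)$ and $\sigma_1(\overline{A})$, giving
\[
\sigma_1(A)+\sigma_1(\overline{A})\leq\sqrt{2\left(\sigma_1^2(A)+\sigma_1^2(\overline{A})\right)}.
\]
So it suffices to bound the sum of squares of the two operator norms by $mn$.

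For this, I would bound each operator norm by the Frobenius norm, using $\sigma_1^2(A)\leq\sum_{i}\sigma_i^2(A)=\operatorname{tr}(AA^{T})=\sum_{i,j}A_{ij}^2$ and likewise for $\overline{A}$. The key elementary observation is entrywise: writing $A_{ij}=t\in[0,1]$ we have $A_{ij}^2+\overline{A}_{ij}^2=t^2+(1-t)^2\leq 1$, with equality exactly when $t\in\{0,1\}$. Summing over the $mn$ entries yields
\[
\sigma_1^2(A)+\sigma_1^2(\overline{A})\leq\sum_{i,j}\left(A_{ij}^2+\overline{A}_{ij}^2\right)\leq mn,
\]
and combining with the previous display proves (\ref{sig1in}).

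For the equality analysis I would trace back through the three inequalities used. Equality forces: $A$ to be a $(0,1)$-matrix (from the entrywise bound); both $A$ and $\overline{A}$ to have rank at most one (from dropping $\sigma_2,\sigma_3,\dots$ in the Frobenius step); and $\sigma_1(A)=\sigma_1(\overline{A})$ (from Cauchy-Schwarz). Since a nonnegative rank-one matrix can be written $\mathbf{u}\mathbf{v}^{T}$ with $\mathbf{u},\mathbf{v}\geq 0$, the $(0,1)$ condition forces the support of $A$ to be a combinatorial rectangle $I\times K$, and the same holds for $\overline{A}$. This is where the main (though elementary) obstacle lies: one must check that the complement of $I\times K$ in $[m]\times[n]$ is again a rectangle only when $I=[m]$ or $K=[n]$, the degenerate empty/full cases being excluded by $\sigma_1(A)=\sigma_1(\overline{A})$. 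Hence $A$ is the indicator either of a set of $b$ full columns, in which case $\sigma_1(A)=\sqrt{mb}$ and $\sigma_1(\overline{A})=\sqrt{m(n-b)}$, or symmetrically of a set of full rows. The condition $\sigma_1(A)=\sigma_1(\overline{A})$ then forces $b=n/2$ (so $n$ is even), respectively $m/2$ rows (so $m$ is even); either way $mn$ is even and $A$ has exactly $mn/2$ ones arranged as claimed. Conversely, a direct singular value computation shows every such $A$ attains equality, completing the characterization.
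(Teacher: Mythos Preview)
Your argument is correct and matches the paper's proof essentially line for line: the same AM--QM/Cauchy--Schwarz step, the same Frobenius bound with the entrywise inequality $t^{2}+(1-t)^{2}\leq 1$, and the same equality analysis via the rank-$1$ condition forcing rectangular supports. Your discussion of why the complement of $I\times K$ can be a rectangle only when $I=[m]$ or $K=[n]$ is a bit more explicit than the paper's, but the content is identical.
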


\begin{proof}
We have
\begin{equation}
\sigma_{1}^{2}\left(  A\right)  +\sigma_{1}^{2}\left(  J_{m,n}-A\right)
\leq\sum_{i=1}^{m}\sigma_{i}^{2}\left(  A\right)  +\sigma_{i}^{2}\left(
J_{m,n}-A\right)  =\sum_{ij}A_{ij}^{2}+\left(  1-A_{ij}\right)  ^{2}\leq mn,
\label{in10}%
\end{equation}
and inequality (\ref{sig1in}) follows by the the AM-QM inequality.

If equality holds in (\ref{sig1in}), then we have equalities throughout
(\ref{in10}). Therefore, $A$ is a $\left(  0,1\right)  $-matrix of rank $1$,
and has precisely $mn/2$ ones. Thus, the ones of $A$ form a submatrix $B$ of
$A,$ Since $J_{m,n}-A$ is also a rank $1$ matrix, its ones are also contained
in a submatrix $B^{\prime}$ of $J_{m,n}-A$. Then $B$ and $B^{\prime}$ are
submatrices of $J_{m,n\text{ }}$ that do not share entries and together
contain all entries of $J_{m,n\text{ }}$. Clearly $B$ must be of size either
$m/2\times n$ or $m\times n/2.$ This completes the proof of Theorem \ref{th4}.
\end{proof}

In the spirit of the previous problems, one can ask the following question:
\emph{Let }$A$\emph{ be a symmetric nonnegative matrix of size }$n,$\emph{
with }$\left\vert A\right\vert _{\infty}\leq1,$\emph{ and with zero diagonal.
What is the maximum of }%
\[
\mu_{1}\left(  A\right)  +\mu_{1}\left(  J_{n}-I_{n}-A\right)  .
\]
This question turns to be known and difficult, but it has been recently
answered in \cite{Csi09} and \cite{Ter11}.\bigskip

\textbf{Acknowledgement }\ This work was done while the second author was
vistiting the University of Memphis.

\end{document}